\newtheorem{theorem}{Theorem}[section]
\theoremstyle{plain}
\newtheorem{lemma}[theorem]{Lemma}
\numberwithin{equation}{section}
\title[]{Sectionally positive curvature tensors admit a metric tensor under which they are Einstein}
\author{Dan Gregorian Fodor}
\address{Faculty of Mathematics, Alexandru Ioan Cuza University, Ia\c si, Romania }
\begin{document}
\maketitle

\begin{abstract}
Let $n \geq 3$ and  $R_{abcd}$ be a $(4,0)$ sectionally positive curvature-type tensor (a tensor possessing all the local symmetries of the $(4,0)$ curvature tensor). Then there exists a metric tensor $g_{ab}$ such that $R_{abcd}\; g^{bd} = g_{ac} \lambda$ for some $\lambda$. Furthermore, $g_{ab}$ is unique up to a constant factor.
\end{abstract}

\section{Introduction}
Transformations of the curvature tensor that simplify its structure while maintaining desired properties are an interesting area of study (eg. \cite{AB}, pg 49-51, canonical form of the curvature tensor in dimension $4$). Starting from the fact that a $(4,0)$ curvature-type tensor taken as an independent object will maintain its sectional positivity even under a different metric, we aim to find in those metrics under which the given $(4,0)$ tensor has the simplest form. To this end, our main theorem characterises those metrics under which the tensor is Einstein (its Ricci tensor is proportional to the metric), including some conditions for their existence and uniqueness.

\section{Main result}
\begin{theorem}
Let $R_{abcd}$ be a $(4,0)$ curvature-type tensor, and $\mathcal{S}$ be the space of $(0,2)$ metric tensors $g^{ab}$ of determinant 1. Define $R : \mathcal{S} \rightarrow \mathbb{R}$  by $R(g^{ab}) = R_{abcd} (g^{ac}g^{bd})$ as the scalar curvature under the metric $g^{ab}$. The set of determinant $1$ metrics under which $R_{abcd}$  is Einstein (ie. $R_{abcd}g^{ac} = \lambda g_{bd}$) is the set of critical points of $R$. If  $R_{abcd}$ is strictly sectionally positive, such a point exists and is unique, corresponding to the minimum of $R$.

\end{theorem}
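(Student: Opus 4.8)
The plan is to read the statement as a constrained variational problem on the manifold $\mathcal S$ of determinant‑$1$ inverse metrics, which is a copy of the symmetric space $SL(n,\mathbb R)/SO(n)$.

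\emph{Step 1: critical points are exactly the Einstein metrics.} I would first compute the differential of $R$. Setting $\mathrm{Ric}_{ac}:=R_{abcd}g^{bd}$ (symmetric, by the pair symmetry $R_{abcd}=R_{cdab}$), the curvature symmetries give, for a symmetric variation $h^{ab}$,
\[
dR_g(h)=R_{abcd}\bigl(h^{ac}g^{bd}+g^{ac}h^{bd}\bigr)=2\,\mathrm{Ric}_{ac}\,h^{ac}.
\]
By Jacobi's formula the constraint $\det g^{ab}=1$ has differential $h\mapsto g_{ab}h^{ab}$, so $T_g\mathcal S=\{h^{ab}:g_{ab}h^{ab}=0\}$. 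Hence $g$ is a critical point of $R|_{\mathcal S}$ iff $\mathrm{Ric}_{ac}$ annihilates that hyperplane in the space of symmetric $2$-tensors, i.e.\ iff $\mathrm{Ric}_{ac}=\lambda g_{ac}$ for some scalar $\lambda$ (which, upon tracing, must equal $R(g)/n$); by the symmetries this is precisely the stated Einstein condition $R_{abcd}g^{ac}=\lambda g_{bd}$.

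\emph{Step 2: existence of a minimum under strict sectional positivity.} Fix a coordinate frame and diagonalise $g^{ab}=\sum_i\mu_i\,f_i^af_i^b$ with $\{f_i\}$ orthonormal for the coordinate inner product and $\prod_i\mu_i=1$; using the antisymmetries of $R$ one gets $R(g)=2\sum_{i<j}\mu_i\mu_j\,\rho_{ij}$, where $\rho_{ij}:=R_{abcd}f_i^af_j^bf_i^cf_j^d$. The space of orthonormal $2$-frames is compact, so strict sectional positivity yields a constant $\kappa>0$ with $\rho_{ij}\ge\kappa$ for every orthonormal pair; hence $R(g)\ge 2\kappa\,e_2(\mu)$, where $e_2$ is the second elementary symmetric function. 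An AM–GM estimate gives, when $\prod_i\mu_i=1$, that $e_2(\mu)\ge (n-1)\,(\max_i\mu_i)^{(n-2)/(n-1)}$, which tends to $+\infty$ as $g^{ab}$ leaves every compact subset of $\mathcal S$ — this is where $n\ge 3$ is needed, since in dimension $2$ the quantity $R$ is constant on $\mathcal S$. So $R$ is continuous, bounded below and proper on the boundaryless manifold $\mathcal S$; it attains a minimum there, which by Step 1 is an Einstein point.

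\emph{Step 3: uniqueness via strict geodesic convexity.} Since $SL(n,\mathbb R)$ acts transitively on $\mathcal S$ by $g\mapsto CgC^{\mathsf T}$ and every element of $\mathcal S$ is $\exp$ of a symmetric traceless matrix, any two points of $\mathcal S$ lie on a geodesic of the symmetric‑space metric of the form $t\mapsto C\exp(tX)C^{\mathsf T}$ with $C\in SL(n,\mathbb R)$ and $X$ symmetric traceless. A linear change of coordinates sending the base point to the identity, followed by an $SO(n)$ rotation diagonalising $X$, transforms $R_{abcd}$ into another strictly sectionally positive curvature‑type tensor and the geodesic into $g(t)=\mathrm{diag}(e^{tx_1},\dots,e^{tx_n})$ with $\sum_i x_i=0$. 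The computation of Step 2 then gives $R(g(t))=2\sum_{i<j}\rho_{ij}\,e^{t(x_i+x_j)}$, a positive combination of the convex functions $e^{t(x_i+x_j)}$; its second derivative $2\sum_{i<j}\rho_{ij}(x_i+x_j)^2e^{t(x_i+x_j)}$ is strictly positive unless $x_i+x_j=0$ for all $i<j$, which for $n\ge 3$ forces $X=0$. Thus $R$ is strictly convex along every non‑constant geodesic of $\mathcal S$, so a critical point is automatically the unique global minimum.

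\emph{Main obstacle.} The differential computation and the Lagrange‑multiplier step are routine. The two substantive points are: the coercivity estimate in Step 2 — turning the pointwise inequality $\rho_{ij}>0$ into a uniform bound via compactness and extracting the correct power of $\max_i\mu_i$ (which genuinely uses $n\ge 3$) — and, above all, recognising in Step 3 the symmetric‑space structure on $\mathcal S$ that makes $R$ geodesically convex, together with the coordinate normalisation that reduces $R$ along a geodesic to a sum of exponentials. Once these are in hand, ``proper and bounded below $\Rightarrow$ a minimum exists'' and ``strictly geodesically convex $\Rightarrow$ the critical point is unique'' are formal.
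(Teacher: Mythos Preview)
Your proof is correct and follows essentially the same three-step architecture as the paper: the differential computation in Step~1 matches Theorem~\ref{t1}, and your Step~3 Hessian formula $2\sum_{i<j}\rho_{ij}(x_i+x_j)^2e^{t(x_i+x_j)}$ is exactly the paper's Lemma~2.9 expression $\sum_{k<p}2(\lambda_k+\lambda_p)^2R_{kpkp}$, obtained via the same diagonalisation and the same $n\ge 3$ observation. The only cosmetic difference is in Step~2, where the paper bounds $R(g)$ below by $2\lambda_1\lambda_2 R_s$ (product of the two largest eigenvalues times the minimal sectional curvature) and then controls $\lambda_1\lambda_2$ via Lemmas~\ref{l1}--\ref{l2}, whereas you bound all of $e_2(\mu)$ at once by $(n-1)(\max_i\mu_i)^{(n-2)/(n-1)}$; both arguments yield the same properness conclusion.
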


\begin{theorem}
Let $R_{abcd}$ be a $(4,0)$ curvature-type tensor which is sectionally positive under a metric $h$. Then it will remain sectionally positive under another metric $g$. Sectional positivity does not depend on the choice of metric.
\end{theorem}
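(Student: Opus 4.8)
The plan is to unwind the definition of sectional positivity and isolate the single place where the metric enters. For a (positive definite) metric $g$ and a $2$-plane $\Pi$ spanned by linearly independent vectors $X,Y$, the sectional curvature of the $(4,0)$ tensor $R_{abcd}$ at $\Pi$ is
\[
K_g(\Pi)=\frac{R_{abcd}\,X^aY^bX^cY^d}{\,g(X,X)\,g(Y,Y)-g(X,Y)^2\,},
\]
where a standard computation with the curvature symmetries shows this is independent of the chosen basis of $\Pi$, and $R_{abcd}$ is (strictly) sectionally positive under $g$ precisely when $K_g(\Pi)>0$ for every such $\Pi$. The numerator $R_{abcd}X^aY^bX^cY^d$ carries no reference to $g$ whatsoever, so the whole content of the theorem is that the denominator has a fixed sign, independent of the choice of metric.

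First I would identify the denominator as the Gram determinant of the pair $(X,Y)$, i.e.\ the determinant of the matrix of the restriction $g|_\Pi$ in the basis $\{X,Y\}$:
\[
g(X,X)\,g(Y,Y)-g(X,Y)^2=\det\begin{pmatrix} g(X,X) & g(X,Y)\\ g(X,Y) & g(Y,Y)\end{pmatrix}.
\]
Because $g$ is positive definite, its restriction to the $2$-dimensional subspace $\Pi$ is again positive definite, hence this determinant is strictly positive — this is just the Cauchy--Schwarz inequality, with equality ruled out by the linear independence of $X$ and $Y$. Therefore $K_g(\Pi)$ and $R_{abcd}X^aY^bX^cY^d$ have the same sign for every $2$-plane, which yields the metric-free reformulation
\[
R_{abcd}\text{ is sectionally positive under }g\iff R_{abcd}\,X^aY^bX^cY^d>0\text{ for all linearly independent }X,Y .
\]

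With this in hand the theorem is immediate: applying the equivalence to $h$ shows that sectional positivity under $h$ means $R_{abcd}X^aY^bX^cY^d>0$ for every linearly independent pair $X,Y$, and applying it to an arbitrary admissible metric $g$ then shows $R_{abcd}$ is sectionally positive under $g$ as well. The semi-positive case ($K_g(\Pi)\ge 0$) is handled word for word, and reversing the inequalities shows likewise that sectional negativity is metric-independent.

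I do not expect a genuine obstacle here — the statement is essentially a tautology once the role of the metric is pinpointed. The only point that requires care is that every metric under consideration must be definite (positive definite, as in the paper's setup), so that the Gram determinant keeps a constant sign across all $2$-planes; for a metric of mixed signature the denominator changes sign from plane to plane and the conclusion genuinely fails, so it is worth stating this hypothesis explicitly.
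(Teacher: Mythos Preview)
Your proposal is correct and follows essentially the same route as the paper: both arguments observe that the numerator $R_{abcd}X^aY^bX^cY^d$ is metric-independent, and that for any positive definite metric the denominator $g(X,X)g(Y,Y)-g(X,Y)^2$ is strictly positive for linearly independent $X,Y$, so the sign of the sectional curvature is governed entirely by the numerator. Your version adds the explicit identification of the denominator as a Gram determinant and the caveat about definiteness of the metric, but the underlying idea is identical.
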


\begin{proof}
A tensor $R_{abcd}$ can be defined as being sectionally positive if \mbox{$R_{abcd}v^{a}q^{b}v^{c}q^{d} \geq 0$}, for all non-collinear non-zero pairs of contravariant vectors $(v,q)$. It can be defined as strictly sectionally positive  if $R_{abcd}v^{a}k^{b}v^{c}k^{d} > 0$ . These definitions do not depend on a metric. The choice of metric does however alter the values of sectional curvature, which is defined by $K(v,q) = \frac{R_{abcd}v^{a}q^{b}v^{c}q^{d}}{(g_{ab}v^{a}v^{b})(g_{cd}q^{c}q^{d})  - (g_{ab}v^{a}q^{b})^{2}}$. They key to linking the above definitions is noting that the denominator $(g_{ab}v^{a}v^{b})(g_{cd}q^{c}q^{d})  - (g_{ab}v^{a}q^{b})^{2}$ is always positive for non-zero non-collinear $(v,q)$, regardless of choice of metric $g_{ab}$.
\end{proof}

\phantom{s\\}
\noindent We will be searching for solutions in the $\frac{n(n+1)}{2}$-dimensional space  $M$ of contravariant metric tensors $g^{ab}$. We endow this space with the natural metric given by
$$\langle d^{a_1 b_1}, d^{a_2 b_2} \rangle_{g} = g_{a_{1}a_{2}} g_{b_{1}b_{2}}d^{a_1 b_1}d^{a_2 b_2}$$
This gives us the geodesic equation ${\ddot g}^{ab} = {\dot g}^{ax}{\dot g}^{yb}g_{xy}$ with geodesics of the form: \begin{equation}
m^{ab}(t) = g^{ax} e^{t(g_{xy}d^{yb})}\label{eq}
\end{equation}

We know that if a metric $g_{ab}$ makes the curvature-type tensor $R_{abcd}$ Einstein, the same can be said for $\lambda (g_{ab})$ where $\lambda > 0$. To eliminate this degree of freedom, we will be working on the space $\mathcal{S}$ of metrics with fixed determinant, $\det(g^{ab})=1$. This is a complete totally geodesic metric space under the metric inherited \mbox{from $M$}.

\begin{theorem} \label{t1}
 Define $R : \mathcal{S} \rightarrow \mathbb{R}$  by $R(g^{ab}) = R_{abcd} (g^{ac}g^{bd})$, ie. the scalar curvature obtained from our tensor in the given metric. Then the critical points of $R$ are exactly those metrics of determinant $1$ under which $R_{abcd}$ is Einstein.
\end{theorem}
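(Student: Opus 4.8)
The plan is to regard $R$ as a smooth function on the manifold $M$ of contravariant metrics, restrict it to the constraint surface $\mathcal{S}=\{\det g^{ab}=1\}$, and apply the Lagrange multiplier criterion, so that only the first variation of $R$ is needed. First I would compute the differential of $R$ at a point $g\in M$: writing a variation as $g^{ab}(t)$ with $\dot g^{ab}(0)=d^{ab}$ for an arbitrary symmetric $d^{ab}$, the Leibniz rule gives
$$\left.\frac{d}{dt}\right|_{0}R\bigl(g(t)\bigr)=R_{abcd}\bigl(d^{ac}g^{bd}+g^{ac}d^{bd}\bigr).$$
Relabelling dummy indices in the second term and using the symmetries $R_{abcd}=R_{cdab}=-R_{bacd}$, the two terms coincide, so $dR_{g}(d)=2\,\rho_{ac}\,d^{ac}$, where $\rho_{ac}:=R_{abcd}\,g^{bd}$ is the Ricci tensor of $R$ in the metric $g$; the same symmetries show $\rho_{ac}=\rho_{ca}$.

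Next I would pin down the constraint. By Jacobi's formula $\frac{d}{dt}\det(g^{ab})=\det(g^{ab})\,g_{ab}\,\dot g^{ab}$, so along $\mathcal{S}$ the differential of $\det$ is the nonzero functional $d\mapsto g_{ab}d^{ab}$; hence $\mathcal{S}$ is a smooth hypersurface with tangent space $T_{g}\mathcal{S}=\{d^{ab}=d^{ba}:g_{ab}d^{ab}=0\}$. The Lagrange multiplier criterion then states that $g\in\mathcal{S}$ is a critical point of $R|_{\mathcal{S}}$ if and only if $dR_{g}$ is proportional to $d(\det)_{g}$ on symmetric tensors, i.e. there is a scalar $\mu$ with $2\rho_{ac}d^{ac}=\mu\,g_{ac}d^{ac}$ for every symmetric $d^{ac}$.

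The last step is the linear-algebra remark that the pairing $(S_{ac},d^{ac})\mapsto S_{ac}d^{ac}$ between symmetric covariant and symmetric contravariant $2$-tensors is nondegenerate; consequently $2\rho_{ac}d^{ac}=\mu g_{ac}d^{ac}$ for all symmetric $d^{ac}$ forces $\rho_{ac}=\tfrac{\mu}{2}g_{ac}$, which is exactly the Einstein condition $R_{abcd}g^{bd}=\lambda g_{ac}$ (equivalently $R_{abcd}g^{ac}=\lambda g_{bd}$ after relabelling), with $\lambda=\mu/2$; contracting with $g^{ac}$ also identifies $\lambda=R(g)/n$. Conversely, if $R_{abcd}g^{bd}=\lambda g_{ac}$ then $dR_{g}(d)=2\lambda g_{ac}d^{ac}=0$ for every $d\in T_{g}\mathcal{S}$, so $g$ is critical. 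I do not expect a genuine obstacle in this argument: the only points needing care are the index bookkeeping with the curvature symmetries that produces the factor $2$ and the symmetry of $\rho_{ac}$, and the nondegeneracy of the tensor pairing invoked in the final step — both routine.
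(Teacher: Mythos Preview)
Your proof is correct and follows essentially the same approach as the paper: compute the first variation $dR_g(d)=2\rho_{ac}d^{ac}$, identify the tangent space $T_g\mathcal{S}$ as the trace-free symmetric tensors via the determinant constraint, and conclude that critical points are exactly where the Ricci tensor is proportional to $g$. The only cosmetic difference is that you invoke the Lagrange multiplier criterion explicitly, whereas the paper writes the decomposition $R_{ab}=\tfrac{1}{n}Rg_{ab}+T_{ab}$ and observes that the first term pairs to zero against $T_g\mathcal{S}$, leaving $\nabla_d R=2T_{ab}d^{ab}$; these are the same computation in different words.
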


\begin{proof}
We compute the gradient of $R$ as its variation for $g^{ab} + hd^{ab}$, $h \sim 0$.\\ We have $R(g^{ab} + hd^{ab}) = R_{abcd} (g^{ac} + hd^{ac})(g^{bd} + hd^{bd}) =  R(g^{ab}) + 2hR_{ab} d^{ab}$ . \\This gives $\nabla_{d^{ab}}R = 2 R_{ab}d^{ab} = 2(\frac{1}{n}R d^{ab}g_{ab} + T_{ab}d^{ab})$ where $T_{ab}$ is the traceless part of the Ricci tensor. However, we are working over the submanifold $\mathcal{S}$ of $M$, where we require $\det(g^{ab}) = \det(g^{ab} + hd^{ab}) = 1$ . This translates to $g_{ab}d^{ab} = 0$, giving us  $\nabla_{d^{ab}}R = 2T_{ab}d^{ab}$. Critical points occur when $T_{ab} =0$, giving $R_{ab} = \lambda g_{ab}$.
\end{proof}

\noindent We now compute some bounds for $R(g^{ab})$ over the space of \mbox{metrics of determinant $1$}.
\begin{lemma}
Let $R_{abcd}$ be a sectionally positive curvature operator (with metric $\delta_{ab}$), and $R_{s}$ be its sectional curvature along a plane $s$. Then $2 R_{s} \leq R$, ie. $\frac{R}{2R_{s}}\geq 1$.
\end{lemma}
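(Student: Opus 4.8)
The plan is to pass to a $\delta$-orthonormal frame adapted to the plane $s$, expand the scalar curvature $R$ as a sum of coordinate sectional curvatures, and then use sectional positivity to discard all but the term coming from $s$.

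First I would pick a basis $e_1,\dots,e_n$ that is orthonormal for the background metric $\delta_{ab}$ and such that $e_1,e_2$ span $s$. Both $R=R_{abcd}\delta^{ac}\delta^{bd}$ and the sectional curvature $R_s$ are frame-independent once the metric $\delta$ is fixed, so there is no loss of generality; in this frame $R_s=R_{1212}$, since the denominator in $K(e_1,e_2)$ is the Gram determinant of an orthonormal pair and hence equals $1$, while
$$R=\sum_{a,b}R_{abab}=2\sum_{a<b}R_{abab},$$
where I have used the antisymmetry in the first two slots to drop the terms with $a=b$ and to pair $R_{abab}$ with $R_{baba}$.

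Next I would invoke sectional positivity termwise: for each pair $a\ne b$ the vectors $e_a,e_b$ are nonzero and non-collinear, and the component $R_{abab}$ is exactly $R_{ijkl}v^iq^jv^kq^l$ with $v=e_a$, $q=e_b$, hence $R_{abab}\ge 0$. Isolating the $(1,2)$ contribution,
$$R=2R_{1212}+2\!\!\sum_{\substack{a<b\\(a,b)\ne(1,2)}}\!\!R_{abab}\;\ge\;2R_{1212}=2R_s,$$
which is the claimed inequality $2R_s\le R$; when $R_s>0$ this rearranges to $\tfrac{R}{2R_s}\ge 1$.

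I do not expect a real obstacle here. The only points needing care are bookkeeping: keeping straight the factor $2$ between summing over ordered pairs $(a,b)$ and unordered pairs $\{a,b\}$, checking that $K(e_1,e_2)$ really reduces to $R_{1212}$ because the Gram determinant of an orthonormal pair is $1$, and noting that applying the sectional-positivity inequality to the basis vectors is legitimate precisely because distinct $e_a,e_b$ are non-collinear. No curvature-type symmetry beyond antisymmetry in the first pair of indices is actually used, and equality holds exactly when every coordinate sectional curvature other than $R_s$ vanishes.
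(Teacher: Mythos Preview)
Your proposal is correct and matches the paper's proof essentially line for line: choose an orthonormal basis with $R_s=R_{1212}$, expand $R=2\sum_{k<p}R_{kpkp}$, and drop the nonnegative terms. The extra care you take with the factor of $2$, the Gram determinant, and the non-collinearity hypothesis is sound bookkeeping that the paper leaves implicit.
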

\begin{proof}
We may select an orthonormal basis such that $R_{s} = R_{1212}$. $$R = R_{abcd}(\delta^{ac}\delta^{bd}) = \sum\limits_{k=1}^{n}\sum\limits_{p=1}^{n} R_{kpkp}= 2(\sum\limits_{k<p} R_{kpkp}) = 2R_{1212} + 2\sum\limits_{\tiny{\substack{k<p\\(k,p)\neq(1,2)}}} R_{kpkp}$$\\
We note that the second term is a sum of sectional curvatures, and therefore greater than or equal to $0$. Therefore $R \geq 2R_{1212}$.
\end{proof}

\begin{lemma} \label{l0}
Let $g^{ab}$ be a metric such that its two largest eigenvalues are $\lambda_{1}$, $\lambda_{2}$. Then, for sectionally positive $R_{abcd}$, we have $R(g^{ab}) \geq 2\lambda_{1}\lambda_{2}R_{s}$ where $R_{s}$ is the smallest sectional curvature of $R_{abcd}$ under the default metric $\delta_{ab}$.

\end{lemma}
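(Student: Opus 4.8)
The plan is to diagonalise $g^{ab}$ with respect to the background metric $\delta_{ab}$ and expand $R(g^{ab})$ in the resulting adapted basis, mimicking the computation of the previous lemma but now carrying along the eigenvalue weights. Since $g^{ab}$ is symmetric and positive definite, the spectral theorem (applied to the pair $(\delta_{ab}, g^{ab})$) yields a $\delta$-orthonormal basis $e_1,\dots,e_n$, i.e. $\delta_{ab}e_k^a e_p^b = \delta_{kp}$, in which $g^{ab} = \sum_{k=1}^n \lambda_k e_k^a e_k^b$, where we order $\lambda_1 \geq \lambda_2 \geq \dots \geq \lambda_n > 0$. Substituting into $R(g^{ab}) = R_{abcd}g^{ac}g^{bd}$ and using multilinearity, together with the curvature symmetries $R_{kpkp}=R_{pkpk}$ and $R_{kkkk}=0$, gives
$$R(g^{ab}) = \sum_{k,p=1}^n \lambda_k \lambda_p\, R_{abcd}\, e_k^a e_p^b e_k^c e_p^d = \sum_{k,p=1}^n \lambda_k\lambda_p R_{kpkp} = 2\sum_{k<p}\lambda_k\lambda_p R_{kpkp},$$
exactly parallel to the expansion in the preceding lemma.

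Next I would note that in a $\delta$-orthonormal basis the coordinate quantity $R_{kpkp}$ coincides with the sectional curvature $K(e_k,e_p)$ of $R_{abcd}$ computed under $\delta$, since its denominator $(\delta_{ab}e_k^ae_k^b)(\delta_{cd}e_p^ce_p^d)-(\delta_{ab}e_k^ae_p^b)^2$ equals $1$. Hence sectional positivity forces $R_{kpkp}\geq R_s\geq 0$ for every pair $k<p$. Because each coefficient $\lambda_k\lambda_p$ is strictly positive, every summand of $2\sum_{k<p}\lambda_k\lambda_p R_{kpkp}$ is nonnegative, so the sum is bounded below by retaining only the single term with $(k,p)=(1,2)$:
$$R(g^{ab}) = 2\sum_{k<p}\lambda_k\lambda_p R_{kpkp} \;\geq\; 2\lambda_1\lambda_2 R_{1212} \;\geq\; 2\lambda_1\lambda_2 R_s,$$
which is precisely the asserted bound.

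The argument is essentially bookkeeping, so there is no deep obstacle; the two points deserving care are (i) that $g^{ab}$ genuinely admits a $\delta$-orthonormal eigenbasis, which is just simultaneous diagonalisation of a positive-definite form against the reference inner product, and (ii) that discarding all but the $(1,2)$ summand is legitimate — this step is where sectional positivity is indispensable, since without the sign control $R_{kpkp}\geq 0$ the omitted terms could be negative and the inequality would break. (One could keep all terms and invoke elementary symmetric function estimates under the constraint $\prod_k\lambda_k = 1$ for a sharper statement, but the crude bound $2\lambda_1\lambda_2 R_s$ is all that the subsequent minimisation argument needs.)
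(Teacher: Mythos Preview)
Your proof is correct and follows essentially the same route as the paper: diagonalise $g^{ab}$ in a $\delta$-orthonormal basis, isolate the $(1,2)$ contribution, and bound $R_{1212}\geq R_s$. The only packaging difference is that the paper obtains the inequality $R(g^{ab})\geq 2\lambda_1\lambda_2 R_{1212}$ by computing the $g$-sectional curvature $R_\lambda$ of the $(v_1,v_2)$-plane and then invoking the previous lemma with the metric $g$, whereas you re-expand $R(g^{ab})=2\sum_{k<p}\lambda_k\lambda_p R_{kpkp}$ directly and discard the nonnegative surplus terms; your version is slightly more self-contained since it avoids the (tacit) step of applying that lemma with a metric other than $\delta$.
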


\begin{proof}
The smallest eigenvalues of $g_{ab}$ are $\frac{1}{\lambda_{1}}$, $\frac{1}{\lambda_{2}}$. Let $v_{1}$, $v_{2}$ be the corresponding eigenvectors. The sectional curvature over the plane spanned by $\{v_1,v_2\}$ for the metric $g_{ab}$ is given by $R_{\lambda} =\frac{R_{abcd}v_{1}^{a}v_{2}^{b}v_{1}^{c}v_{2}^{d}}{(g_{ab}v_{1}^{a}v_{1})(g_{cd}v_{2}^{c}v_{2}^{d})  - (g_{ab}v^{a}_{1}v_{2}^{b})^{2}} = (R_{1212}) \lambda_{1}\lambda_{2} \geq  \lambda_{1}\lambda_{2}R_{s}$. However, by the previous lemma, $R(g^{ab}) \geq 2R_{\lambda}$. This gives $R(g^{ab})\geq 2\lambda_{1}\lambda_{2}R_{s}$, thus completing the proof.
\end{proof}

\begin{lemma} \label{l1}
Let $g^{ab}$ be a metric tensor of determinant $1$ and greatest eigenvalue $\lambda_{1}$. Then the product of its two greatest eigenvalues is greater than or equal to $\lambda_{1}^{\frac{n}{n-1}}$.
\end{lemma}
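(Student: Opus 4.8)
The plan is to attack the inequality directly in the single-eigenvalue form to which it reduces. Write the eigenvalues of $g^{ab}$ as $\lambda_1 \ge \lambda_2 \ge \cdots \ge \lambda_n > 0$ (all strictly positive, since $g^{ab}$ is a positive-definite metric), so that the determinant-one condition reads $\prod_{i=1}^n \lambda_i = 1$. Because $\tfrac{n}{n-1} - 1 = \tfrac{1}{n-1}$, dividing the target $\lambda_1\lambda_2 \ge \lambda_1^{n/(n-1)}$ through by $\lambda_1 > 0$ shows it to be exactly equivalent to the single bound $\lambda_2 \ge \lambda_1^{1/(n-1)}$. Thus the entire content of the lemma is whether the second-largest eigenvalue is forced to exceed $\lambda_1^{1/(n-1)}$, and that is the statement I would aim to establish.

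First I would try to extract a lower bound on $\lambda_2$ from the only two facts available: the ordering of the eigenvalues and the determinant constraint. The natural tool is the arithmetic–geometric mean inequality, in the form ``the maximum of a finite set dominates its geometric mean,'' applied to the $n-1$ trailing eigenvalues $\lambda_2, \ldots, \lambda_n$. Their product is pinned down by the determinant, since $\prod_{i=2}^n \lambda_i = 1/\lambda_1$. As $\lambda_2$ is the largest of these $n-1$ numbers, it dominates their geometric mean, so $\lambda_2 \ge (1/\lambda_1)^{1/(n-1)} = \lambda_1^{-1/(n-1)}$, with equality precisely when $\lambda_2 = \cdots = \lambda_n$.

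The hard part — in fact an insurmountable one — is that this sharp argument produces the exponent $-\tfrac{1}{n-1}$, whereas the stated lemma demands $+\tfrac{1}{n-1}$. Since $\lambda_1 \ge 1$ always holds (the maximum of positive reals with product $1$ is at least their geometric mean $1$), the two thresholds differ by the factor $\lambda_1^{2/(n-1)} \ge 1$, so the stated bound is strictly stronger than the sharp one for every $\lambda_1 > 1$. The sharpness of the AM--GM step is not a weakness of the method but reflects a genuine extremal metric: taking $\lambda_2 = \cdots = \lambda_n = \lambda_1^{-1/(n-1)}$ (which respects $\lambda_2 \le \lambda_1$ exactly because $\lambda_1 \ge 1$) gives a determinant-one metric on which $\lambda_1\lambda_2 = \lambda_1^{(n-2)/(n-1)} < \lambda_1^{n/(n-1)}$ for all $\lambda_1 > 1$. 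Concretely, in $n = 3$ the diagonal metric $g^{ab} = \mathrm{diag}(4, 1, \tfrac14)$ has determinant $1$, greatest eigenvalue $\lambda_1 = 4$, and $\lambda_1\lambda_2 = 4$, while $\lambda_1^{n/(n-1)} = 4^{3/2} = 8$; hence $4 < 8$ and the inequality as written fails.

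I would therefore conclude that no proof of the inequality with exponent $\tfrac{n}{n-1}$ is possible, and that the correct, sharp, and provable statement is $\lambda_1\lambda_2 \ge \lambda_1^{(n-2)/(n-1)}$ (equivalently $\lambda_2 \ge \lambda_1^{-1/(n-1)}$), furnished in one line by the AM--GM estimate above and attained exactly on the equal-trailing-eigenvalue configuration. In short, the exponent $\tfrac{n}{n-1}$ in the statement should read $\tfrac{n-2}{n-1}$, and it is this corrected bound that the above plan proves.
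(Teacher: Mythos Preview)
Your analysis is correct: the lemma as stated is false, and your counterexample $g^{ab}=\mathrm{diag}(4,1,\tfrac14)$ in dimension $n=3$ settles the matter. The paper's own proof follows exactly the same minimization idea you use---fix $\lambda_1$, note that the remaining $n-1$ eigenvalues have product $1/\lambda_1$, and observe that the second-largest is smallest when all trailing eigenvalues are equal to $\lambda_1^{-1/(n-1)}$---but then commits an arithmetic slip in the final line, recording the product $\lambda_1\cdot\lambda_1^{-1/(n-1)}$ as $\lambda_1^{n/(n-1)}$ instead of the correct $\lambda_1^{(n-2)/(n-1)}$. So your argument and the paper's are the same in spirit; you simply carry out the last multiplication correctly.

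It is worth noting that the error propagates: the subsequent lemma defining the compact set $N$ uses this exponent to choose the threshold $(\tfrac{R(\delta^{ab})}{2R_s})^{(n-1)/n}$ on the largest eigenvalue so that raising it to the power $\tfrac{n}{n-1}$ returns $\tfrac{R(\delta^{ab})}{2R_s}$. With the corrected exponent $\tfrac{n-2}{n-1}$, that threshold must be replaced by $(\tfrac{R(\delta^{ab})}{2R_s})^{(n-1)/(n-2)}$ (valid for $n\ge 3$). This change is harmless for the overall strategy---$N$ remains compact and the existence of a minimizer goes through unchanged---but the numerical bounds in the paper need the adjustment you have identified.
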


\begin{proof}
As the determinant is $1$, the product of the remaining eigenvalues is $\frac{1}{\lambda_{1}}$. Our aim is to minimize the largest of them. This happens when all are equal to $\frac{1}{\lambda_{1}}^{\frac{1}{n-1}}$, giving a product of $\lambda_{1}^{\frac{n}{n-1}}$.

\end{proof}

\begin{lemma} \label{l2}
Let $g^{ab}$ be a metric tensor of determinant $1$ and smallest \mbox{eigenvalue $\lambda_{2}$}. Then the product of its two largest eigenvalues is greater than or equal too $\lambda_{2}^{-\frac{2}{n-1}}$.
\end{lemma}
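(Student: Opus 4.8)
The plan is to reduce the claim to an elementary inequality on the eigenvalues of $g^{ab}$ and then close it with a geometric-mean estimate. Write $\mu_1\ge\mu_2\ge\cdots\ge\mu_n>0$ for the eigenvalues of $g^{ab}$, so that $\mu_n=\lambda_2$ is the smallest and $\mu_1\mu_2\cdots\mu_n=\det(g^{ab})=1$; the target quantity is the product of the two largest, $\mu_1\mu_2$. Setting the smallest eigenvalue aside, the remaining $n-1$ eigenvalues have product $\mu_1\mu_2\cdots\mu_{n-1}=1/\lambda_2=:P$, and $P\ge 1$ because the least of $n$ positive reals with product $1$ is at most $1$. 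Hence it suffices to show: for positive reals $\mu_1\ge\cdots\ge\mu_{n-1}$ with $\prod_{i=1}^{n-1}\mu_i=P$, one has $\mu_1\mu_2\ge P^{2/(n-1)}$.

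For this I would observe that $\mu_1\mu_2$ is the largest of all $\binom{n-1}{2}$ pairwise products $\mu_i\mu_j$ with $i<j$ (indeed $i<j$ forces $\mu_i\le\mu_1$ and $\mu_j\le\mu_2$), while the product of all these pairwise products equals $\left(\prod_{i=1}^{n-1}\mu_i\right)^{n-2}=P^{n-2}$, since each $\mu_i$ occurs in exactly $n-2$ of the pairs. Since the geometric mean of a finite family of positive numbers never exceeds its maximum, $\mu_1\mu_2\ge\left(P^{n-2}\right)^{1/\binom{n-1}{2}}=P^{2/(n-1)}$, where the exponent simplifies using $\binom{n-1}{2}=\tfrac{(n-1)(n-2)}{2}$. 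Substituting $P=\lambda_2^{-1}$ gives $\mu_1\mu_2\ge\lambda_2^{-2/(n-1)}$. Equality forces $\mu_1=\cdots=\mu_{n-1}=\lambda_2^{-1/(n-1)}$, which is consistent with $\mu_{n-1}\ge\mu_n$ exactly because $\lambda_2\le 1$.

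I do not expect a real obstacle here: once the pairing is set up this is a one-line averaging argument, and the only places needing a moment's care are the combinatorial identity ``product of the $\binom{n-1}{2}$ pairwise products equals $P^{n-2}$'' and the exponent bookkeeping $(n-2)/\binom{n-1}{2}=2/(n-1)$. If one prefers the terser register of Lemma~\ref{l1}, the same content reads: with $\lambda_2$ fixed, the product of the two largest among the remaining $n-1$ eigenvalues is minimized when those eigenvalues are as equal as possible, namely all equal to $\lambda_2^{-1/(n-1)}$, which yields the bound $\lambda_2^{-2/(n-1)}$; the pairwise-product comparison above (or a direct exchange argument that replaces any $\mu_i>\mu_j$ by $\sqrt{\mu_i\mu_j}$ without changing the product) is what makes ``as equal as possible'' precise.
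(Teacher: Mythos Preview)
Your proof is correct and follows the same line as the paper's: remove the smallest eigenvalue so that the remaining $n-1$ eigenvalues have product $1/\lambda_2$, and then argue that the product of the two largest among them is at least $\lambda_2^{-2/(n-1)}$, attained when all $n-1$ are equal. The paper simply asserts that minimum without justification; your pairwise-product geometric-mean argument is a clean way to make that step rigorous.
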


\begin{proof}
The product of the remaining $n-1$ eigenvalues is $\frac{1}{\lambda_{2}}$. The product of the two largest of them is minimized when they are all equal. This gives $\lambda_{2}^{-\frac{2}{n-1}}$.
\end{proof}

\begin{lemma}
Let $R_{abcd}$ be a strictly sectionally positive curvature operator and $R_{s}$ be its smallest sectional curvature under the metric $\delta^{ab}$. Let $N$ be the set of metrics of determinant $1$ whose smallest eigenvalue is greater than or equal to $(\frac{R(\delta^{ab})}{2R_{s}})^{-\frac{n-1}{2}}$ and largest eigenvalue is smaller than or equal to $(\frac{R(\delta^{ab})}{2R_{s}})^{\frac{n-1}{n}}$. Then, for any metric $g^{ab}\in \mathcal{S}/N$, we have $R(g^{ab}) > R(\delta^{ab})$.

\end{lemma}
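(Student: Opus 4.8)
The plan is to show that any determinant-$1$ metric $g^{ab}$ lying outside $N$ has large scalar curvature, using the two-eigenvalue estimate of Lemma \ref{l0} together with the pigeonhole-type bounds of Lemmas \ref{l1} and \ref{l2}. Write $C = \frac{R(\delta^{ab})}{2R_s}$, which is $\geq 1$ by the first lemma of this section. Membership in $\mathcal{S}\setminus N$ means that \emph{either} the largest eigenvalue $\lambda_1$ of $g^{ab}$ exceeds $C^{\frac{n-1}{n}}$, \emph{or} the smallest eigenvalue $\lambda_{\min}$ of $g^{ab}$ is strictly less than $C^{-\frac{n-1}{2}}$. I would handle these two cases separately, in each case producing a lower bound for the product of the two largest eigenvalues of $g^{ab}$ that beats $C$, and then feeding that product into Lemma \ref{l0}.

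In the first case, $\lambda_1 > C^{\frac{n-1}{n}}$, Lemma \ref{l1} gives that the product $P$ of the two largest eigenvalues satisfies $P \geq \lambda_1^{\frac{n}{n-1}} > \left(C^{\frac{n-1}{n}}\right)^{\frac{n}{n-1}} = C$. In the second case, $\lambda_{\min} < C^{-\frac{n-1}{2}}$, Lemma \ref{l2} gives $P \geq \lambda_{\min}^{-\frac{2}{n-1}} > \left(C^{-\frac{n-1}{2}}\right)^{-\frac{2}{n-1}} = C$. So in both cases $P > C = \frac{R(\delta^{ab})}{2R_s}$. Now Lemma \ref{l0} applied to $g^{ab}$ (whose two largest eigenvalues multiply to $P$) yields $R(g^{ab}) \geq 2PR_s > 2 \cdot \frac{R(\delta^{ab})}{2R_s} \cdot R_s = R(\delta^{ab})$, which is exactly the claim. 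Note that strict sectional positivity is what guarantees $R_s > 0$, so that the divisions and the strict inequalities are legitimate.

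The only delicate point is bookkeeping: one must be sure the exponent arithmetic in the two cases exactly inverts the exponents defining $N$, and that the inequalities stay strict. Since $C \geq 1$, raising to a positive power preserves strict inequalities in the first case; in the second case $x \mapsto x^{-\frac{2}{n-1}}$ is decreasing, so $\lambda_{\min} < C^{-\frac{n-1}{2}}$ flips to $\lambda_{\min}^{-\frac{2}{n-1}} > C$, again strict. I would also remark that Lemma \ref{l0}'s hypothesis only needs the two largest eigenvalues named, so it applies verbatim. This is really the last estimate needed before the compactness argument: $N$ is a closed and bounded (hence compact, within $\mathcal{S}$) region on which $R$ attains a minimum, and the present lemma shows that minimum is also a global minimum over all of $\mathcal{S}$, setting up the existence half of Theorem \ref{t1}'s strict-positivity statement.
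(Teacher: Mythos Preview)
Your proof is correct and follows exactly the same two-case structure as the paper's own argument: split according to which defining inequality of $N$ is violated, apply Lemma~\ref{l1} or Lemma~\ref{l2} to force the product of the two largest eigenvalues above $C=\frac{R(\delta^{ab})}{2R_s}$, and finish with Lemma~\ref{l0}. Your additional remarks on why the strict inequalities survive and on the role of $R_s>0$ are accurate bookkeeping that the paper leaves implicit.
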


\begin{proof}
If $g^{ab}\in \mathcal{S}/N$ then $g^{ab}$ satisfies one of two conditions. \\1) Its largest eigenvalue is greater than $(\frac{R(\delta^{ab})}{2R_{s}})^{\frac{n-1}{n}}$. Then by Lemma \ref{l1} the product of its two largest eigenvalues is greater than $\frac{R(\delta^{ab})}{2R_{s}}$.\\
2)Its smallest eigenvalue is smaller than \ref{l2}. Then by Lemma \ref{l2} the product of its two largest eigenvalues is greater than $\frac{R(\delta^{ab})}{2R_{s}}$.

\noindent Applying Lemma \ref{l0} for all of the above cases shows that \mbox{$R(g^{ab})> 2\frac{R(\delta^{ab})}{2R_{s}} R_{s} = R(\delta^{ab})$}.
\end{proof}

\noindent We are now in a position to prove the existence of a critical point for $R$, namely its minimum point. We know $R(g^{ab}) > R(\delta^{ab})$ for any $g^{ab}$ outside $N$. The set of metric tensors $g^{ab}$ for which $R(g^{ab}) \leq R(\delta^{ab})$ is a non-zero subset of the compact set $N$ (we know $\delta^{ab} \in N$). Therefore $R$ must achieve at least one metric $m^{ab} \in \mathcal{S}$  which minimizes it. That metric will be a critical point, so, by Theorem \ref{t1}, $R_{abcd}$ will be Einstein under $m^{ab}$. Uniqueness is equivalent to showing the minimum is the only critical point of $R$. To prove its uniqueness we require an additional lemma.

\begin{lemma} For strictly sectionally positive $R_{abcd}$, the hessian of $R:\mathcal{S} \rightarrow \mathbb{R}$ at any point is strictly positive definite.

\end{lemma}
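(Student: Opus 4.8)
The plan is to realise $\operatorname{Hess}(R)$ as the ordinary second derivative of $R$ along a geodesic of $\mathcal{S}$ and then to diagonalise the resulting quadratic form in a convenient basis. Fix the point at which the Hessian is to be computed and choose a basis in which it equals $\delta^{ab}$; since sectional positivity does not depend on the basis (Theorem 2.2), while the normalisation $\det = 1$ and the metric on $\mathcal{S}$ are preserved by such a change of basis, $R_{abcd}$ stays strictly sectionally positive and no generality is lost. In this basis a tangent vector to $\mathcal{S}$ is a symmetric matrix $D = (d^{ab})$ with $\operatorname{tr}D = 0$ (the constraint $g_{ab}d^{ab}=0$ at $g=\delta$), and by \eqref{eq} the geodesic it generates is $m^{ab}(t) = (e^{tD})^{ab}$. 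As this curve is a geodesic, $\nabla_{\dot m}\dot m = 0$, so $\frac{d^{2}}{dt^{2}}R(m(t))\big|_{t=0} = \operatorname{Hess}(R)(D,D)$.

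Next I would carry out the second variation. From $R(m(t)) = R_{abcd}\,m^{ac}(t)m^{bd}(t)$, using $m(0)=\delta$, $\dot m(0)=D$, $\ddot m(0)=D^{2}$ and the symmetry $R_{abcd}=R_{badc}$, one gets
$$\operatorname{Hess}(R)(D,D)=2\,R_{abcd}\bigl((D^{2})^{ac}\delta^{bd}+D^{ac}D^{bd}\bigr).$$
Rotating to an orthonormal basis that diagonalises $D$, with eigenvalues $\mu_{1},\dots,\mu_{n}$ (so that $\sum_a\mu_a=0$), the right-hand side becomes $2\sum_{a,b}R_{abab}\bigl(\mu_a^{2}+\mu_a\mu_b\bigr)$, and symmetrising the summand under $a\leftrightarrow b$ --- allowed because $R_{abab}=R_{baba}$ --- collapses it to
$$\operatorname{Hess}(R)(D,D)=\sum_{a,b}R_{abab}\,(\mu_a+\mu_b)^{2}=2\sum_{a<b}R_{abab}\,(\mu_a+\mu_b)^{2}.$$

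Finally I would read off strict positivity. For $a\neq b$ the coefficient $R_{abab}$ is the sectional curvature of the coordinate plane $\operatorname{span}(e_a,e_b)$, hence $R_{abab}>0$ by strict sectional positivity, so $\operatorname{Hess}(R)(D,D)$ is a nonnegative combination of the squares $(\mu_a+\mu_b)^{2}$. It vanishes only when $\mu_a+\mu_b=0$ for every pair $a\neq b$; since $n\geq 3$, adding the three relations $\mu_1+\mu_2=\mu_1+\mu_3=\mu_2+\mu_3=0$ forces $\mu_1=\mu_2=\mu_3=0$, and similarly every $\mu_a=0$, i.e.\ $D=0$. Thus $\operatorname{Hess}(R)$ is strictly positive definite at the chosen point, as required. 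I expect the main obstacle to be the second variation and its rearrangement: one has to be sure the $\nabla_{\dot m}\dot m$ term really drops out (so that the naive second derivative is the genuine Hessian) and that the sign-indefinite cross term $R_{abcd}D^{ac}D^{bd}$ fuses with the $\ddot m$ contribution into the manifestly controllable sum of squares above, after which the hypothesis of strict sectional positivity does the rest.
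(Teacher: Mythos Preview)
Your argument is correct and follows essentially the same route as the paper: compute the second variation of $R$ along the geodesic $e^{tD}$, obtain the quadratic form $2(R_{ab}\delta_{cd}+R_{abcd})d^{ac}d^{bd}$, diagonalise $D$, and rewrite it as $2\sum_{a<b}R_{abab}(\mu_a+\mu_b)^2$, then use $n\ge 3$ to force all $\mu_a=0$. The only cosmetic difference is that you normalise to $g=\delta$ at the outset while the paper does so after writing down the Hessian.
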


\begin{proof}
We determine the hessian by computing the second-order variation in $R$ along a geodesic. From the geodesic equation $m^{ab}(t) = g^{ax} e^{t(g_{xy}d^{yb})}$ (Equation \ref{eq}), we get $m^{ab}(h) = g^{ax} e^{h(g_{xy}d^{yb})} \sim g^{ab} + hd^{ab} + \frac{h^{2}}{2}d^{ax}g_{xy}d^{yb}$, where $h\sim 0$ . Expanding on $R_{abcd}m^{ac}m^{bd}$
we obtain $$R_{abcd}m^{ac}m^{bd} = R + 2hR_{ab}d^{ab} + h^{2}(R_{ab}d^{ax}g_{xy}d^{yb} + R_{abcd}d^{ac}d^{bd})$$ We see that the hessian when applied to $d^{ab}$ gives $2(R_{ab}g_{cd} + R_{abcd})d^{ac}d^{bd}$. We mow prove this quantity is always greater than $0$ for $d^{ab} \neq 0$. We may choose coordinates under which $g_{ab} = \delta_{ab}$ and $d^{ab} = \sum\limits_{k=1}^{n} \lambda_{k} ({v_{k}}^{a}{v_{k}}^{b})$ where $\{v_{1},v_{2}\cdots v_{n}\}$ form an orthonormal frame. Under these coordinates we may write $$2(R_{ab}g_{cd} + R_{abcd})d^{ac}d^{bd} = \sum\limits_{k<p}2(\lambda_{k}+\lambda_{p})^{2}R_{abcd}({v_{k}}^{a}{v_{p}}^{b}{v_{k}}^{c}{v_{p}}^{d})= \sum\limits_{k<p}2(\lambda_{k}+\lambda_{p})^{2}R_{kpkp}$$
We have $(\lambda_{k}+\lambda_{p})^{2}R_{kpkp} \geq 0$. The result is greater than or equal to $0$, and it is equal to $0$ if and only if $\forall k,p, k<p$ we have $(\lambda_{k}+\lambda_{p})^{2}=0$. But, for $n \geq  3$ this implies $\forall {k}, \lambda_{k} = 0$, theretofore $d^{ab}=0$. We have: $$d^{ab}\neq 0 \implies 2(R_{ab}g_{cd} + R_{abcd})d^{ac}d^{bd} > 0$$ This completes the proof of the strict positivity of the hessian.
\end{proof}

We may now prove the minimum critical point is unique. Consider $m_{1}, m_{2}$ two distinct critical points and let $p(t)$ be a parameterised geodesic uniting them, with $p(0) = m_{1}$ and $p(1) = m_{2}$. We have $\dot p(0) = 0$, and $\ddot p(t) > 0$, due to the strict positivity of the hessian. By integrating, we obtain $\dot p(1) > 0$. Therefore $m_{2} = p(1)$ can not be a critical point. From this we obtain our main theorem.

%footnotesize {\noindent Faculty of Mathematics, Alexandru Ioan Cuza University, Ia\c si, Romania }


\begin{thebibliography}{99}

\bibitem{AB}  Arthur. L. Besse, \emph{Einstein Manifolds}, Springer-Verlag Berlin Heidelberg, 1987

\end{thebibliography}
\end{document}